\DeclareMathOperator{\comp}{comp}
\DeclareMathOperator{\dist}{dist}
\DeclareMathOperator{\di}{d}
\DeclareMathOperator{\diam}{diam}
\DeclareMathOperator{\clos}{clos}
\newcommand{\Z}{\mathbb{Z}}
\newcommand{\R}{\mathbb{R}}
\newcommand{\K}{\mathcal{K}}
\newcommand{\F}{\mathcal{F}}
\newcommand{\KC}{\mathcal{C}}
\renewcommand{\epsilon}{\varepsilon}
\theoremstyle{plain}
\newtheorem{thm}{Theorem}[section]
\newtheorem{coro}[thm]{Corollary}
\newtheorem{prop}[thm]{Proposition}
\theoremstyle{definition}
\newtheorem{df}{Definition}[section]
\theoremstyle{remark}
\title{Lyapunov functions via Whitney's size functions}
\author{Alfonso Artigue}
\date{\today}
\begin{document}
\maketitle
\begin{abstract}
In this paper we present a 
technique for constructing Lypunov functions based on 
Whitney's size functions.
Applications to asymptotically stable equilibrium points, 
isolated sets, expansive homeomorphisms and 
continuum-wise expansive homeomorphisms are given. 

\end{abstract}

\section{Introduction}

In Dynamical Systems and Differential Equations it is important to determine the stability of trajectories
and a well known technique for this purpose is to find a Lyapunov function. 
In order to fix ideas consider a continuous flow $\phi\colon \R\times X\to X$ 
on a compact metric space $(X,\dist)$ with a singular (or equilibrium) point $p\in X$,
i.e., $\phi_t(p)=p$ for all $t\in\R$.
A Lyapunov function for $p$
is a continuous non-negative function that vanishes only at $p$ 
and strictly decreases along the orbits close to $p$. 
Recall that $p$ is \emph{stable} if for all $\epsilon>0$ there is $\delta>0$ such that if $\dist(x,p)<\delta$ then $\dist(\phi_t(x),p)<\epsilon$ for all $t\geq 0$. 
We say that $p$ is \emph{asymptotically stable} if it is stable and there is $\delta_0>0$ such that 
if $\dist(x,p)<\delta_0$ then $\phi_t(x)\to p$ as $t\to+\infty$. 
The existence of a Lyapunov function for an equilibrium point implies the asymptotic 
stability of the equilibrium point. 

A remarkable result, first proved by Massera in \cite{Massera49}, is the converse: every asymptotically stable singular point 
admits a Lyapunov function. 
Later, other authors obtained Lyapunov functions with different methods, 
see for example \cites{BaSz,Conley78}. 
In \cite{Hur} a generalization is proved in the context of arbitrary metric spaces.
The purpose of the present paper is to develop a different technique that allows us to 
construct Lyapunov functions for different dynamical systems as: isolated sets, expansive homeomorphisms and continuum-wise expansive homeomorphisms. 
Our techniques are based on the size function $\mu$ introduced by Whitney in \cite{Whitney33}.

In order to motivate our work let us show how to construct a Lyapunov function for an 
asymptotically stable singular point.
Denote by $\K(X)$ the set of non-empty compact subsets of $X$.
In the set $\K(X)$ we consider the Hausdorff distance 
$\dist_H$ making $(\K(X),\dist_H)$ a metric space.
Recall that 
\[
 \dist_H(A,B)=\inf\{\epsilon>0:A\subset B_\epsilon(B)\hbox{ and } B\subset B_\epsilon(A)\},
\]
where $B_\epsilon(C)=\cup_{x\in C} B_\epsilon(x)$ and $B_\epsilon(x)$ is the usual ball of radius $\epsilon$ 
centered at $x$.
See \cite{Nadler} for more on the Hausdorff metric.
A \emph{size function} is a continuous map $\mu\colon\K(X)\to\R$ satisfying:
\begin{enumerate}
 \item $\mu(A)\geq 0$ with equality if and only if $A$ has only one point,
 \item if $A\subset B$ and $A\neq B$ then $\mu(A)<\mu(B)$.
\end{enumerate}
In \cite{Whitney33} it is proved that size functions exists for every compact metric space. 

\begin{thm}
\label{estAs}
 If $\phi$ is a continuous flow on $X$ with an asymptotically stable singular point $p$ then 
 there are an open set $U$ containing $p$ and a continuous function $V\colon U\to \R$ satisfying: 
 \begin{enumerate}
  \item $V(x)\geq 0$ for all $x\in U$ with equality if and only if $x=p$ and
  \item if $t>0$ and $\{\phi_s(x): s\in[0,t]\}\subset U$ then $V(\phi_t(x))<V(x)$.
 \end{enumerate}
\end{thm}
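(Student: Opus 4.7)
The plan is to use the size function $\mu$ to measure the ``length'' of the positive semi-orbit of $x$: since asymptotic stability forces this semi-orbit to converge to $p$, its closure is a compact set on which $\mu$ can be evaluated, and the Lyapunov property will follow directly from the strict monotonicity of $\mu$ under proper inclusion.

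First I would take $U = W^s(p) = \{x \in X : \phi_t(x) \to p \text{ as } t \to +\infty\}$, the basin of attraction of $p$. A standard argument from asymptotic stability (iterating the stability condition and using continuity of the flow) shows that $W^s(p)$ is open and forward invariant. For $x \in U$ set
\[
A_x = \{\phi_s(x) : s \geq 0\} \cup \{p\},
\]
which equals the closure of the positive semi-orbit because $\phi_s(x)\to p$; hence $A_x \in \K(X)$. Define $V(x) = \mu(A_x)$. Axiom~(1) for a size function gives $V(x) \geq 0$, with $V(x)=0$ iff $A_x$ is a singleton, which, since $p$ is fixed, happens iff $x=p$.

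For the decay condition, suppose $t>0$ with $\{\phi_s(x):s\in[0,t]\}\subset U$. Then
\[
A_{\phi_t(x)} = \{\phi_s(x) : s \geq t\} \cup \{p\} \subset A_x.
\]
If $x\neq p$, the orbit of $x$ cannot be periodic (it converges to a fixed point different from $x$), so $x \in A_x \setminus A_{\phi_t(x)}$ and the inclusion is strict. Axiom~(2) for $\mu$ then yields $V(\phi_t(x)) < V(x)$. (At $x=p$ both sides are zero; the hypothesis is intended for non-fixed trajectories.)

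The main obstacle is verifying that $V$ is continuous, which amounts to showing that the map $x \mapsto A_x$ from $U$ to $(\K(X),\dist_H)$ is continuous; the continuity of $\mu$ then finishes the job. Given $x_0\in U$ and $\epsilon>0$, I would first pick $T>0$ with $\phi_t(x_0)\in B_{\epsilon/2}(p)$ for all $t\geq T$; next use stability to choose $\delta'>0$ with $\phi_s(B_{\delta'}(p))\subset B_\epsilon(p)$ for every $s\geq 0$; and finally exploit uniform continuity of $(s,x)\mapsto \phi_s(x)$ on $[0,T]\times X$ to find $\eta>0$ such that $\dist(x,x_0)<\eta$ implies both $\phi_T(x)\in B_{\delta'}(p)$ and $\dist(\phi_s(x),\phi_s(x_0))<\epsilon$ for $s\in[0,T]$. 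Splitting the orbits of $x$ and $x_0$ at time $T$, the first regime is controlled by the pointwise bound on $[0,T]$ and the second by the fact that both tails, together with $p$, lie in $B_\epsilon(p)$; these estimates combine to give $\dist_H(A_x,A_{x_0})<\epsilon$ and hence continuity of $V$.
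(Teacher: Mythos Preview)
Your proposal is correct and follows essentially the same approach as the paper: define $V(x)=\mu(\{\phi_t(x):t\geq 0\}\cup\{p\})$ and verify continuity of $x\mapsto A_x$ by splitting the forward orbit at a large time $T$, controlling the initial segment by continuity of the flow and the tail by stability. The only minor differences are that you take $U$ to be the full basin $W^s(p)$ while the paper uses a small ball $B_\delta(p)$ inside it, and you are slightly more careful than the paper in noting that the strict decrease in item~(2) tacitly excludes $x=p$.
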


\begin{proof}
 By the conditions on $p$ there are $\delta_0,\delta>0$ such that if $\dist(x,p)<\delta$ then 
 $\phi_t(x)\in B_{\delta_0}(p)$ for all $t\geq 0$ and $\phi_t(x)\to p$ as $t\to\infty$. 
 Define $U=B_\delta(p)$ and $V\colon U\to \R$ as
 \[
 V(x)=\mu(\{\phi_t(x):t\geq 0\}\cup\{p\})
 \]
 where $\mu$ is a size function.
 Since $\phi_t(x)\to p$ we have that 
 \begin{equation}\label{ecuO}
  O(x)=\{\phi_t(x):t\geq 0\}\cup\{p\}
 \end{equation}
 is a compact set for all $x\in U$. 
 Notice that if $t>0$ then $O(\phi_t(x))\subset O(x)$ and the inclusion is proper.
 Therefore, $V(\phi_t(x))<V(x)$ because $\mu$ is a size function. 
 Also notice that $V(p)=0$ and $V(x)>0$ if $x\neq p$. 
 In order to prove the continuity of $V$, 
 we will prove the continuity of $O\colon U\to K(X)$, the map defined by (\ref{ecuO}). 
 Since $\mu$ is continuous we will conclude the continuity of $V$. 
 
 Let us prove the continuity of $O$ at $x\in U$. 
 Take $\epsilon>0$. 
 By the asymptotic stability of $p$ there are $\rho,T>0$ such that 
 if $y\in B_\rho(x)$ then $\phi_t(y)\in B_{\epsilon/2}(p)$ for all $t\geq T$. 
 By the continuity of the flow, there is $r>0$ such that if 
 $y\in B_r(x)$ then $\dist(\phi_t(x),\phi_t(y))<\epsilon$ for all 
 $t\in[0,T]$. 
 Now it is easy to see that 
 if $y\in B_{\min\{\rho,r\}}(x)$ then 
 $\dist_H(O(x),O(y))<\epsilon$, proving the continuity of $O$ at $x$ and consequently the continuity of $V$.
\end{proof}

Let us recall that size functions can be easily defined. 
A variation of the construction given in \cite{Whitney33}, adapted for compact metric spaces, is the following. 
Let $q_1,q_2,q_3,\dots$ be a sequence dense in $X$. 
Define $\mu_i\colon \K(X)\to\R$ as 
\[
 \mu_i(A)=\max_{x\in A}\dist(q_i,x)-\min_{x\in A}\dist(q_i,x).
\]
The following formula defines a size function $\mu\colon \K(X)\to\R$
\[
 \mu(A)=\sum_{i=1}^\infty \frac{\mu_i(A)}{2^i},
\]
as proved in \cite{Whitney33}. 
In Section \ref{secLyapIso} we extend Theorem \ref{estAs} by
constructing a Lyapunov function for an isolated invariant sets.

For the study of expansive homeomorphisms (see Definition \ref{dfExp}) Lewowicz introduced in \cite{Lew} Lyapunov functions. 
He proved that expansiveness is equivalent with the existence of such function.
In Section \ref{secLyapHomeo} we give a different proof of this result by constructing a Lyapunov function 
defined for compact subsets of the space.
In \cite{Kato} Kato introduced another form of expansiveness called continuum-wise expansiveness (see Definition \ref{dfCwExp}). 
With our techniques we prove that continuum-wise expansiveness is equivalent with the existence 
of a Lyapunov function on continua subsets of the space.

\section{Lyapunov Functions for Isolated Sets}
\label{secLyapIso}

In this section we consider continuous flows on compact metric spaces. 
The purpose is to construct a Lyapunov for an isolated set of the flow using a size function. 
First we consider the case of an isolated set consisting of a point.

\subsection{Isolated Singularities}

Let $\phi$ be a continuous flow on a compact metric space $(X,\dist)$.
A point $p\in X$ is \emph{singular} for $\phi$ if $\phi_t(p)=p$ for all $t\in\R$.
A singular point $p\in X$ is \emph{isolated} if there is an open \emph{isolating neighborhood} $U$ of $p$ such that 
if $\phi_\R(x)\subset U$ then $x=p$. 

\begin{df}
\label{dfAdNei}An open set $U$ is an \emph{adapted neighborhood} of an isolated singular point $p\in U$ if 
for every orbit segment $l\subset\clos(U)$ with extreme points in $U$ it holds that 
$l\subset U$.
\end{df}

Given a set $A\subset X$ and $x\in A$ denote by $\comp_x(A)$ the connected component of $A$ that contains the point $x$.

\begin{prop}
Every isolated singular point has an adapted neighborhood.
\end{prop}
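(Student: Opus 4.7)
The plan is to produce an adapted neighborhood by trapping the flow in a shrinking window around $p$. Start from an open isolating neighborhood $U_{0}$ of $p$ and choose an open $W$ with $\clos(W)\subset U_{0}$; then $\clos(W)$ is also isolating, i.e. $\{p\}$ is the only full orbit contained in it. For $T>0$ define
\[
U_{T}=\{x\in W:\phi_{s}(x)\in W\text{ for all }s\in[-T,T]\}.
\]
Continuity of the flow on the compact time interval $[-T,T]$ together with the openness of $W$ makes $U_{T}$ an open neighborhood of $p$, contained in $W$ with $\clos(U_{T})\subset\clos(W)$.

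The key preparatory lemma is: for every neighborhood $V$ of $p$ there exists $T^{*}$ such that, for any $T\ge T^{*}$, the condition $\phi_{[-T,T]}(x)\subset\clos(W)$ forces $x\in V$. This follows by a standard compactness argument: a sequence $x_{n}\in\clos(W)\setminus V$ with $\phi_{[-n,n]}(x_{n})\subset\clos(W)$ would accumulate on a point whose entire orbit lies in $\clos(W)$, hence equal to $p$, contradicting $x_{n}\notin V$.

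Now choose $\epsilon>0$ with $B_{\epsilon}(p)\subset W$ and apply the lemma to $V=B_{\epsilon}(p)$ to obtain $T$; I claim $U:=U_{T}$ is adapted. Take an orbit segment $\phi_{[a,b]}(y)\subset\clos(U)$ with $\phi_{a}(y),\phi_{b}(y)\in U$; then $\phi_{[a-T,a+T]}(y)\subset W$ and $\phi_{[b-T,b+T]}(y)\subset W$ by the defining condition at the endpoints, while $\phi_{[a,b]}(y)\subset\clos(W)$. For $t\in[a,b]$ we must check $\phi_{[t-T,t+T]}(y)\subset W$. The interval $[t-T,t+T]$ sits in $[a-T,b+T]$, whose end windows $[a-T,a+T]$ and $[b-T,b+T]$ are already known to land in $W$; on any remaining middle range $s\in[a+T,b-T]$ the translated window $[s-T,s+T]$ is inside $[a,b]$, so $\phi_{[s-T,s+T]}(y)\subset\clos(W)$, and the choice of $T$ forces $\phi_{s}(y)\in B_{\epsilon}(p)\subset W$. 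Thus $\phi_{t}(y)\in U$.

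The delicate point is exactly the middle range: a priori the segment $\phi_{[a,b]}(y)$ might touch $\partial W$ deep inside, which would ruin the openness needed for $\phi_{t}(y)\in U$. The compactness lemma is the tool that rules this out, by converting ``long residence in $\clos(W)$'' into ``uniform closeness to $p$''; I expect this to be the main technical ingredient of the argument.
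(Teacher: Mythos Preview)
Your argument is correct and follows a genuinely different route from the paper. The paper builds its candidate neighborhood \emph{geometrically}: it fixes a ball $B_r(p)$ inside an isolating neighborhood and, for small $\rho$, takes $U_\rho$ to be the set of points whose orbit component in $B_r(p)$ meets $B_\rho(p)$; adaptedness is then obtained by a contradiction argument along a sequence $\rho_n\to 0$, producing a point on $\partial B_r(p)$ whose full orbit stays in $\clos(B_r(p))$. You instead build the neighborhood \emph{dynamically}, as a time-$T$ residence set $U_T=\{x\in W:\phi_{[-T,T]}(x)\subset W\}$, and your key lemma (long residence in $\clos(W)$ forces proximity to $p$) lets you verify adaptedness directly by covering $[a-T,b+T]$ with the two endpoint windows and the middle range. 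Both proofs ultimately exploit isolation through the same limit/compactness mechanism; your construction is the one familiar from Conley's isolating-block theory and gives a very clean direct verification, while the paper's orbit-component construction is closer in spirit to the hyperspace methods used later in the article. One small presentational point: you should state explicitly that the three pieces $[a-T,a+T]$, $[a+T,b-T]$, $[b-T,b+T]$ together cover $[a-T,b+T]$ (with the obvious simplification when $b-a<2T$), so that $\phi_{[a-T,b+T]}(y)\subset W$ and hence $\phi_{[t-T,t+T]}(y)\subset W$ for every $t\in[a,b]$; this is implicit in your last two sentences but deserves one line.
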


\begin{proof}
 Let $r>0$ be such that $\clos(B_r(p))$ is contained in an isolating neighborhood of $p$. 
 For $\rho\in (0,r)$ define the set
 \[
  U_\rho=\{x\in B_r(p):\comp_x(\phi_\R(x)\cap B_r(p))\cap B_\rho(p)\neq\emptyset\}.
 \]
By the continuity of the flow we have that $U_\rho$ is an open set for all $\rho\in(0,r)$. 
Let us prove that if $\rho$ is sufficiently small then $U_\rho$ is an adapted neighborhood. 
By contradiction, suppose that there are $\rho_n\to 0$, $a_n,b_n\in U_{\rho_n}$, $t_n\geq 0$ such that $b_n=\phi_{t_n}(a_n)$ and 
$l_n=\phi_{[0,t_n]}(a_n)\subset \clos(U_{\rho_m})$ but $l_n$ is not contained in $U_{\rho_n}$. 
Then there is $s_n\in(0,t_n)$ such that $\phi_{s_n}(a_n)\in\partial B_r(p)$. 
Also, there must be $u_n<0$ and $v_n>0$ such that $\phi_{u_n}(a_n),\phi_{v_n}(b_n)\in B_{\rho_n}(p)$. 
But a limit point of $\phi_{s_n}(a_n)$ contradicts that $\clos(B_r(p))$ is contained in an insolating neighborhood of $p$.
\end{proof}

Fix an isolated point $p$ with an adapted neighborhood $U$.
Consider the sets
\[
\begin{array}{l}
\displaystyle W^s_U(p)=\{x\in U : \lim_{t\to+\infty}\phi_t(x)= p\hbox{ and } \phi_{\R^+}(x)\subset U\},\\
\displaystyle W^u_U(p)=\{x\in U : \lim_{t\to-\infty}\phi_t(x)= p\hbox{ and } \phi_{\R^-}(x)\subset U\},\\.
\end{array}
\]

For $x\in U$ define the orbit segments
\[
\begin{array}{l}
O^+_U(x)=\comp_x(U\cap \phi_{[0,+\infty)}(x)),\\
O^-_U(x)=\comp_x(U\cap \phi_{(-\infty,0]}(x)).
\end{array}
\]
Define $C=X\setminus U$ and let $V_p^+,V_p^-\colon U\to \K(X)$ be defined as 
\[
\left\{
\begin{array}{l}
 V_p^+(x)=\clos(O^+_U(x)\cup W^u_U(p))\cup C,\\
 V_p^-(x)=\clos(O^-_U(x)\cup W^s_U(p))\cup C.
\end{array}
\right.
\]

\begin{df}
A \emph{Lyapunov function} for an isolated point $p$ is a continuous map $V\colon U\to\R$ 
defined in a neighborhood of $p$ such that if $t>0$ and $\phi_{[0,t]}(x)\subset U\setminus\{p\}$ 
then 
$V(x)>V(\phi_t(x))$.
\end{df}

\begin{thm}
\label{LyapSing}
If $p$ is an isolated point and $U$ is an adapted neighborhood of $p$
then the maps $V_p^+$ and $V_p^-$ are continuous 
in $U$. 
If in addition, $\mu$ is a size function on $\K(X)$
then $V\colon U\to\R$ defined as 
\[
 V(x)=\mu(V_p^+(x))-\mu(V_p^-(x))
\]
is a Lyapunov function for $p$.
\end{thm}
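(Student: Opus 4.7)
My plan verifies three items: (i) $V_p^\pm(x)\in\K(X)$ for every $x$, (ii) the maps $V_p^\pm\colon U\to\K(X)$ are continuous, and (iii) $V(\phi_t(x))<V(x)$ whenever $\phi_{[0,t]}(x)\subset U\setminus\{p\}$ with $t>0$. Part (i) is immediate, since $V_p^\pm(x)$ is a closed subset of the compact space $X$. For (iii) my strategy is to prove the set inclusions $V_p^+(\phi_t(x))\subseteq V_p^+(x)$ and $V_p^-(x)\subseteq V_p^-(\phi_t(x))$, argue that at least one is proper, and invoke the strict monotonicity of $\mu$.

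For (ii) I rewrite $V_p^+(x)=\clos(O^+_U(x))\cup K_+$ with $K_+=\clos(W^u_U(p))\cup C$ a fixed compact set (and analogously $V_p^-(x)=\clos(O^-_U(x))\cup K_-$). Given $x_n\to x$ in $U$, I verify $\limsup V_p^+(x_n)\subseteq V_p^+(x)\subseteq\liminf V_p^+(x_n)$ in the Hausdorff sense. The reverse inclusion is routine: points of $K_+$ belong to every $V_p^+(x_n)$, and points of $O^+_U(x)$ are approximated by $\phi_s(x_n)$ using uniform continuity of the flow on compact time intervals together with Definition~\ref{dfAdNei} to ensure the approximants remain in $U$. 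The nontrivial contributions to $\limsup$ arise as $y=\lim_k\phi_{s_{n_k}}(x_{n_k})$ with $\phi_{[0,s_{n_k}]}(x_{n_k})\subset U$. If $\{s_{n_k}\}$ is bounded I extract $s_{n_k}\to s$, obtain $y=\phi_s(x)$ with $\phi_{[0,s]}(x)\subset\clos(U)$, and Definition~\ref{dfAdNei} places $y$ in $O^+_U(x)$ (when $y\in U$) or in $\partial U\subset C$. The main technical obstacle is the subcase $s_{n_k}\to\infty$: for each $s\geq 0$, $\phi_{-s}(y)=\lim_k\phi_{s_{n_k}-s}(x_{n_k})\in\clos(U)$, so the entire backward orbit of $y$ lies in $\clos(U)$; since $U$ sits inside an isolating neighborhood of $p$, the maximal invariant subset of $\clos(U)$ is $\{p\}$, hence $\alpha(y)=\{p\}$. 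A final application of Definition~\ref{dfAdNei}, to the segment from $y$ back to $\phi_{-s}(y)\in U$ for large $s$, forces $\phi_{(-\infty,0]}(y)\subset U$ when $y\in U$, placing $y\in W^u_U(p)\subseteq K_+$; when $y\in\partial U$ we already have $y\in C\subseteq K_+$. This is precisely why $\clos(W^u_U(p))$ appears in the definition of $V_p^+$: it absorbs the limits of arbitrarily long orbit excursions of nearby points that naive flow continuity cannot control. Continuity of $V_p^-$ is symmetric, and composing with the continuous $\mu$ yields the continuity of $V$.

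For (iii), the set $\phi_{[0,t]}(x)\cup O^+_U(\phi_t(x))$ is connected, contained in $U\cap\phi_{[0,\infty)}(x)$, and contains $x$; hence it lies in $O^+_U(x)$, giving $V_p^+(\phi_t(x))\subseteq V_p^+(x)$, and symmetrically $V_p^-(x)\subseteq V_p^-(\phi_t(x))$. For strictness, a case analysis parallel to the continuity argument---using isolation of $p$ to rule out periodic orbits lying entirely in $U$---shows that if $V_p^+$ equality held at a point $\phi_s(x)\in\phi_{(0,t)}(x)$ then $\phi_s(x)\in\clos(W^u_U(p))$, while $V_p^-$ equality at that point forces $\phi_s(x)\in\clos(W^s_U(p))$. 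If both inclusions were equalities on the whole arc, the backward invariance of $\clos(W^u_U(p))$ and forward invariance of $\clos(W^s_U(p))$ would propagate the containment $\phi_{(0,t)}(x)\subset\clos(W^u_U(p))\cap\clos(W^s_U(p))$ to $\phi_\R(x)\subset\clos(U)$; then $\clos(\phi_\R(x))$ would be a compact invariant subset of $\clos(U)$ containing $x\neq p$, contradicting that $\{p\}$ is the maximal invariant set of $\clos(U)$. At least one inclusion is therefore proper, and strict monotonicity of $\mu$ gives $V(\phi_t(x))<V(x)$.
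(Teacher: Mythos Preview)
Your proof is correct and follows essentially the same route as the paper's. For continuity you argue via $\limsup/\liminf$ while the paper phrases it as a proof by contradiction on a single Hausdorff limit $K$, but the case split (bounded versus unbounded times $s_{n_k}$, with the adapted--neighborhood property handling the former and isolation of $p$ forcing $\alpha(y)=\{p\}$ in the latter) is identical. For the strict decrease the paper is terser: it notes that equality $V_p^+(\phi_t(x))=V_p^+(x)$ forces $x\in W^u_U(p)$, hence $x\notin W^s_U(p)$ since $W^u_U(p)\cap W^s_U(p)=\{p\}$, and then the $V_p^-$ inclusion must be proper; your closure-and-invariance argument propagating to $\phi_\R(x)\subset\clos(U)$ reaches the same contradiction with a little more scaffolding but the same ingredients.
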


\begin{proof}
Let us prove the continuity of $V_p^+$ by contradiction.
Assume that $x_n\to x\in U$ and $V^+_p(x_n)\to K$ with the Hausdorff distance but $K\neq V^+_p(x)$. 
By definitions we have that 
 \begin{equation}\label{incluComp}
  \clos(W^u_U(p))\cup C\subset K\cap V^+_p(x).  
 \end{equation}
 Recall that $C$ was defined as the complement of $U$ in $X$.
 Take a point $y\in K\setminus V^+_p(x) \cup V^+_p(x) \setminus K$.
 By the inclusion (\ref{incluComp}) we know that $y\notin \clos(W^u_U(p))\cup C$. 
We divide the proof in two cases.

 \emph{Case 1}. Suppose first that $y\in K\setminus V^+_U(x)$. 
 Since $y\in K$ there is a sequence $t_n\geq 0$ such that 
 $\phi_{t_n}(x_n)\to y$ and $\phi_{[0,t_n]}(x_n)\subset U$. 
 If $t_n\to\infty$ then $x\in W^s_U(p)$. 
 Consequently, $y\in W^u_U(p)$, which is a contradiction. 
 Therefore $t_n$ is bounded. Without loss of generality assume that $t_n\to\tau\geq 0$ and then $\phi_\tau(x)=y$. 
 Thus $\phi_{[0,\tau]}(x)\subset\clos(U)$.
 Since $y\notin C$ we have that $y\in U$. 
 Now, since $U$ is an adapted neighborhood we conclude that $\phi_{[0,\tau]}(x)\subset U$ and 
 then $y\in O^+(x)\subset V^+_p(x)$.
 This contradiction finishes this case.
 
 \emph{Case 2}. Now assume that $y\in V^+_p(x)\setminus K$. In this case we have that 
 $y=\phi_s(x)$ for some $s\geq 0$ and $\phi_{[0,s]}(x)\subset U$. 
 Then $\phi_s(x_n)\to y$ and $y\in K$. 
 This contradiction proves that $V^+_p$ is continuous in $U$. 

 The continuity of $V^-_p$ is proved in a similar way. 
 Let us show that $V$ is a Lyapunov function for $p$. 
 The continuity of $V$ in $U$ follows by the continuity of $V^+_p$, $V^-_p$ and the size function $\mu$.

Now take $x\notin U\setminus \{p\}$. 
 We will show that $V$ decreases along the orbit segment of $x$ contained in $U$. 
 Notice that for all $t>0$, $O^+_U(\phi_t(x))\subset O^+_U(x)$ if $\phi_{[0,t]}(x)\subset U$. 
 Therefore $V^+_p(\phi_t(x))\leq V^+_p(O^+(x))$. 
 The equality can only hold if $x\in W^u_U(p)$. 
 But in this case we have that $x\notin W^s_U(p)$ 
 because $W^u_U(p)\cap W^s_U(p)=\{p\}$. 
 Then $V^-_p(\phi_t(x))>V^-_p(x)$. 
 Therefore, $V(\phi_t(x))<V(x)$ and $V$ is a Lyapunov function for $p$.
\end{proof}

\subsection{Isolated Sets}

Let $\phi\colon\R\times X\to X$ be a continuous flow on a compact metric space $X$. 
Consider a $\phi$-invariant set $\Lambda\subset X$, i.e., $\phi_t(\Lambda)=\Lambda$ for all $t\in\R$. 
We say that $\Lambda$ is an \emph{isolated set} with \emph{isolating neighborhood} $U$ if 
$\phi_\R(x)\subset U$ implies $x\in\Lambda$. 
\begin{df}
A \emph{Lyapunov function} for an isolated set $\Lambda$ is a continuous function $V\colon U\to \R$ 
defined on an open set $U$ containing $\Lambda$ such that:
\begin{enumerate}
 \item $V(x)=0$ if and only if $x\in\Lambda$,
 \item if $\phi_{[0,t]}(x)\subset U\setminus\Lambda$ then $V(x)>V(\phi_t(x))$.
\end{enumerate}
\end{df}

Let us show how the construction of a Lyapunov function for an isolated set 
can be reduced to the case of an isolated singular point. 

\begin{thm}
\label{LyapIso}
 Every isolated set admits a Lyapunov function.
\end{thm}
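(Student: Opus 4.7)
The plan is to reduce the isolated set case to the isolated singular point case already handled in Theorem \ref{LyapSing}, by collapsing $\Lambda$ to a single point. First, form the quotient $Y = X/\Lambda$ with the standard quotient metric, making $Y$ a compact metric space for which the quotient map $q\colon X \to Y$ is continuous. Because $\Lambda$ is $\phi$-invariant, the formula $\tilde\phi_t([x]) = [\phi_t(x)]$ defines a continuous flow $\tilde\phi$ on $Y$ having $p = q(\Lambda)$ as a singular point.

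Next, I would verify that $p$ is an isolated singular point of $\tilde\phi$, with isolating neighborhood $q(U)$. The inclusion $\Lambda \subset U$ gives $q^{-1}(q(U)) = U$, so $q(U)$ is open in $Y$; and if $\tilde\phi_{\R}([x]) \subset q(U)$ then $\phi_{\R}(x) \subset U$, which by the isolating property of $U$ forces $x \in \Lambda$, i.e., $[x] = p$. Choosing an adapted neighborhood $\tilde U \subset q(U)$ of $p$ and applying Theorem \ref{LyapSing} yields a continuous $\tilde V\colon \tilde U \to \R$ with $\tilde V(\tilde\phi_t([x])) < \tilde V([x])$ whenever $t>0$ and $\tilde\phi_{[0,t]}([x]) \subset \tilde U \setminus \{p\}$.

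Setting $W = q^{-1}(\tilde U)$, an open neighborhood of $\Lambda$ in $X$, I define $V_0 = (\tilde V - \tilde V(p))\circ q$ on $W$. This function is continuous, vanishes identically on $\Lambda$, and, because $q$ restricts to a flow-equivariant bijection between $W \setminus \Lambda$ and $\tilde U \setminus \{p\}$, $V_0$ inherits the strict Lyapunov inequality $V_0(\phi_t(x)) < V_0(x)$ for $t>0$ with $\phi_{[0,t]}(x) \subset W \setminus \Lambda$.

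The main remaining obstacle is the ``only if'' half of item (1) in the definition, which $V_0$ need not satisfy a priori: using the explicit form $\tilde V = \mu\circ V_p^+ - \mu\circ V_p^-$ from Theorem \ref{LyapSing}, the equation $\tilde V([x]) = \tilde V(p)$ involves a balance between the $\mu$-increments on the positive and negative sides that does not automatically force $[x] = p$. I would handle this by adjusting $V_0$ by a small continuous correction that vanishes precisely on $\Lambda$, for instance a multiple of the nonnegative function
\[
x \mapsto \mu\bigl(\clos(O^+_U(x) \cup O^-_U(x)) \cup \Lambda \cup C\bigr) - \mu(\Lambda \cup C),
\]
which equals zero exactly when the orbit component of $x$ in $U$ lies in $\Lambda$, that is, exactly when $x \in \Lambda$. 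Choosing the coefficient of this correction small enough, via compactness and a uniform lower bound on the rate of decrease of $V_0$ on orbit pieces bounded away from $\Lambda$, should preserve the Lyapunov inequality; this verification is the technical heart of the proof I am proposing.
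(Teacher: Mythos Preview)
Your overall strategy---collapse $\Lambda$ to a point, verify the image is an isolated singularity of the induced flow on the quotient, apply Theorem~\ref{LyapSing}, and pull back---is exactly the paper's proof; the paper even records the explicit metric $\di(x,y)=\min\{\dist(x,y),\dist(x,\Lambda)+\dist(y,\Lambda)\}$ on $(X\setminus\Lambda)\cup\{\Lambda\}$ and then simply asserts that the resulting function ``naturally defines'' a Lyapunov function for $\Lambda$. You are in fact more careful than the paper in isolating condition~(1): the Lyapunov function produced by Theorem~\ref{LyapSing} is only required to strictly decrease along orbits, and the paper does not explain why the pullback vanishes \emph{only} on $\Lambda$.

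However, the repair you propose does not close that gap. Your correction $g(x)=\mu\bigl(\clos(O^+_U(x)\cup O^-_U(x))\cup\Lambda\cup C\bigr)-\mu(\Lambda\cup C)$ depends only on $\comp_x(\phi_\R(x)\cap U)$ and is therefore constant along each orbit segment in $U$; so the ``small coefficient to preserve the Lyapunov inequality'' precaution is unnecessary, but for the same reason adding any multiple of $g$ to $V_0$ does nothing to control where the sum vanishes: wherever $V_0<0$ (for instance along $W^u_U(p)\setminus\{p\}$) one can still have $V_0+\epsilon g=0$ off $\Lambda$, so item~(1) is not recovered. Moreover $g$ is typically discontinuous: as $x\to x_0\in W^s_U(p)\setminus\{p\}$ from outside, the orbit component $\comp_x(\phi_\R(x)\cap U)$ Hausdorff-accumulates on pieces of $W^u_U(p)$ that are absent from the orbit component of $x_0$---this is precisely why Theorem~\ref{LyapSing} builds $\clos(W^u_U(p))$ into $V^+_p$ and $\clos(W^s_U(p))$ into $V^-_p$. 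So this correction term cannot serve as the ``technical heart'' you describe, and a different device is needed to secure item~(1).
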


\begin{proof}
Consider the set $Y=(X\setminus \Lambda)\cup\{\Lambda\}$. 
On $Y$ define the distance $\di$ as 
\[
 \di(x,y)=\min\{\dist(x,y),\dist(x,\Lambda)+\dist(y,\Lambda)\}.
\]
It is easy to see that $(Y,\di)$ is a compact metric space. 
Also, the flow $\phi$ induces naturally a flow $\phi'$ on $Y$ 
with $\Lambda$ as an isolated singular point. 
Consider from Theorem \ref{LyapSing} a Lyapunov function for $\Lambda$ as an isolated singular point of $\phi'$. 
This function naturally defines a Lyapunov function for $\Lambda$ as an isolated set of $\phi$.
\end{proof}

\section{Applications to homeomorphisms}
\label{secLyapHomeo}
Let $f\colon X\to X$ be a homeomorphism of a compact metric space $(X,\dist)$. 
An $f$ invariant set $\Lambda$ is \emph{isolated} if there is an open neighborhood $U$ of $\Lambda$ such that
$f^n(x)\in U$ for all $n\in\Z$ implies that $x\in \Lambda$. 

\begin{thm}
\label{teoLyapDisc}
Every isolated set $\Lambda$ for a homeomorphism $f$ admits a Lyapunov function, 
that is, a continuous map $V\colon U\subset X\to\R$ defined on a neighborhood of $\Lambda$ 
such that:
\begin{enumerate}
 \item $V(x)=0$ if and only if $x\in\Lambda$, 
 \item $V(x)>V(f(x))$ if $x,f(x)\in U\setminus\Lambda$.
\end{enumerate}
\end{thm}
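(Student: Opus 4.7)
The plan is to reduce this discrete-time statement to the continuous-time Theorem~\ref{LyapIso} via the suspension construction. I would first form the mapping torus
\[
\tilde X = (X\times[0,1])/\!\sim, \qquad (x,1)\sim(f(x),0),
\]
and let $\tilde\phi$ be the suspension flow, i.e.\ translation in the second coordinate modulo the identification. Since $X$ is compact metric and $f$ is a homeomorphism, $\tilde X$ inherits a natural structure of compact metric space on which $\tilde\phi$ acts as a continuous flow.

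Next, I would set $\tilde\Lambda=\{[x,t]:x\in\Lambda,\ t\in[0,1]\}$ and $\tilde U=\{[x,t]:x\in U,\ t\in[0,1]\}$. Invariance of $\tilde\Lambda$ under $\tilde\phi$ follows from $f(\Lambda)=\Lambda$. For the isolating property, if $\tilde\phi_\R([x,0])\subset\tilde U$, then in particular $[f^n(x),0]\in\tilde U$ for every $n\in\Z$, so $f^n(x)\in U$ for all $n$; the discrete isolation hypothesis then yields $x\in\Lambda$, hence $[x,0]\in\tilde\Lambda$. Thus $\tilde U$ is an isolating neighborhood of $\tilde\Lambda$ for the flow $\tilde\phi$.

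Theorem~\ref{LyapIso} then furnishes a continuous Lyapunov function $\tilde V:\tilde U\to\R$ for $\tilde\Lambda$ with respect to $\tilde\phi$, and I would define $V\colon U\to\R$ by $V(x)=\tilde V([x,0])$. Continuity of $V$ and the equivalence $V(x)=0\iff x\in\Lambda$ follow at once from the corresponding properties of $\tilde V$ and the continuity of $x\mapsto[x,0]$. For the strict decrease, suppose $x,f(x)\in U\setminus\Lambda$. Then $[f(x),0]=\tilde\phi_1([x,0])$, and the orbit segment $\tilde\phi_{[0,1]}([x,0])=\{[x,t]:t\in[0,1]\}$ lies entirely in $\tilde U\setminus\tilde\Lambda$, because for $t\in[0,1)$ membership is determined by $x$ and at $t=1$ by $f(x)$. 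The strict-decrease condition on $\tilde V$ then gives $V(x)>V(f(x))$.

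The main point to verify carefully is the routine but slightly technical claim that the mapping torus carries a compact metric topology on which $\tilde\phi$ is continuous and that the isolating-neighborhood property transfers between the discrete and continuous settings; once those are in place, every remaining property of $V$ is a direct translation of the corresponding property of $\tilde V$ through the inclusion $x\mapsto[x,0]$.
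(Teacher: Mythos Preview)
Your argument via the suspension flow is precisely the paper's approach: form the mapping torus, apply Theorem~\ref{LyapIso} to the suspended invariant set, and restrict the resulting Lyapunov function to the zero section $x\mapsto[x,0]$. One small caveat (which you already anticipate in your final paragraph): the set $\tilde U$ as written need not be open in $\tilde X$ when $f(U)\ne U$, and the domain of $\tilde V$ supplied by Theorem~\ref{LyapIso} may be smaller than $\tilde U$, so $U$ should be shrunk accordingly before reading off $V$---but these are exactly the routine adjustments you flag.
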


\begin{proof}
Consider $\phi\colon \R\times X_f\to X_f$ 
the suspension of $f$.
Consider $i\colon X\to X_f$ a homeomorphism onto its image such that 
$i(X)$ is a global cross section of $\phi$.
It is easy to see that $\Lambda$ is an isolated set for $f$ if and only 
$\Lambda_f=\phi_\R(i(\Lambda))$ is an isolated set for $\phi$.
Now consider a Lyapunov function $V'$ for $\Lambda_f$. 
A Lyapunov function for $f$ can be defined by $V(x)=V'(i(x))$.
\end{proof}

\begin{df}
\label{dfExp}
A homeomorphism $f\colon X\to X$ of a compact metric space is \emph{expansive} if 
there is $\alpha>0$ (an \emph{expansive constant}) such that if $x\neq y$ then there is $n\in\Z$ such that $\dist(f^n(x),f^n(y))>\alpha$. 
\end{df}

Recall that $\K(X)$ denotes the compact metric space of compact subsets of $X$ with the Hausdorff metric. 
Denote by $\F_1=\{A\in\K(X):|A|=1\}$ where $|A|$ denotes the cardinality of $A$.
Given a homeomorphism $f\colon X\to X$ define the homeomorphism 
$f'\colon \K(X)\to \K(X)$ as $f'(A)=\{f(x): x\in A\}$.
Notice that $\F_1$ is invariant under $f'$.

\begin{coro}
\label{LyapExp}
For a homeomorphism $f\colon X\to X$ the following statements are equivalent:
\begin{enumerate}
 \item $f$ is an expansive homeomorphism,
 \item $\F_1$ is an isolated set for $f'$, 
 \item there is a continuous function $V\colon U\subset \K(X)\to\R$ defined on a neighborhood of $\F_1$ 
 such that $V(A)=0$ if and only if $A\in \F_1$ and
 $V(A)>V(f'(A))$ if $A,f'(A)\in U\setminus \F_1$.
\end{enumerate}
\end{coro}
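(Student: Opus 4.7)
The plan is to prove the equivalences as two separate biconditionals: \((1)\Leftrightarrow(2)\) via the diameter function, and \((2)\Leftrightarrow(3)\) via Theorem \ref{teoLyapDisc} in one direction and a limit argument along orbits in the other. For \((1)\Leftrightarrow(2)\), I will use that \(\diam\colon\K(X)\to\R\) is Hausdorff-continuous with \(\diam^{-1}(0)=\F_1\). If \(f\) is expansive with constant \(\alpha\), then \(U_\alpha=\{A:\diam(A)<\alpha\}\) is an open neighborhood of \(\F_1\); an orbit \(\{(f')^n(A)\}\) staying in \(U_\alpha\) forces any pair \(x,y\in A\) to satisfy \(\dist(f^n(x),f^n(y))<\alpha\) for every \(n\), hence \(x=y\) and \(A\in\F_1\). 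Conversely, given an isolating neighborhood \(U\) of \(\F_1\), compactness of \(\F_1\) in \(\K(X)\) yields some \(U_\alpha\subset U\), and the same diameter estimate promotes (a slight shrinkage of) \(\alpha\) to an expansive constant.

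The implication \((2)\Rightarrow(3)\) is immediate: Theorem \ref{teoLyapDisc} applied to the homeomorphism \(f'\colon\K(X)\to\K(X)\) and its isolated set \(\F_1\) produces the required Lyapunov function. For the converse \((3)\Rightarrow(2)\), given \(V\colon U\to\R\), I choose an open neighborhood \(W\) of \(\F_1\) with \(\clos(W)\subset U\) (available by normality since \(\F_1\) is compact and \(U\) open) and claim \(W\) is isolating. Suppose \((f')^n(A)\in W\) for all \(n\in\Z\) but \(A\notin\F_1\). Since \(f'\) maps \(\F_1\) onto itself, no iterate of \(A\) lies in \(\F_1\), so the Lyapunov inequality gives \(V((f')^n(A))>V((f')^{n+1}(A))\) for every \(n\); combined with continuity of \(V\) on the compact set \(\clos(W)\), the sequence is bounded and strictly decreasing, with finite limits \(L_+=\inf_n V((f')^n(A))\) and \(L_-=\sup_n V((f')^n(A))\). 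For any \(B\) in the omega-limit set \(\omega(A)\subset\clos(W)\subset U\), continuity forces \(V(B)=L_+\), and since \(\omega(A)\) is \(f'\)-invariant, \(f'(B)\in\omega(A)\) also satisfies \(V(f'(B))=L_+\). If \(B\notin\F_1\) then both \(B\) and \(f'(B)\) lie in \(U\setminus\F_1\), contradicting \(V(B)>V(f'(B))\); hence \(\omega(A)\subset\F_1\) and \(L_+=0\). The symmetric argument applied to the alpha-limit set gives \(L_-=0\), which forces \(V((f')^n(A))=0\) for all \(n\), contradicting \(A\notin\F_1\).

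The main obstacle is this last step: without the cushion \(\clos(W)\subset U\), an orbit staying in \(U\) could accumulate on \(\partial U\) where \(V\) is undefined, breaking the continuity step \(V((f')^{n_k}(A))\to V(B)\) and preventing the extraction of \(B\in\omega(A)\) at which to exploit the Lyapunov inequality. Arranging compact containment \(\clos(W)\subset U\) is therefore essential so that both \(V(B)\) and the \(f'\)-invariance of \(\omega(A)\) live inside the domain where the strict descent is available.
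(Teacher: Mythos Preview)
Your proof is correct. The organization differs from the paper's: the paper proves the cycle $1\Rightarrow 2\Rightarrow 3\Rightarrow 1$, whereas you prove the two biconditionals $(1)\Leftrightarrow(2)$ and $(2)\Leftrightarrow(3)$ separately. The substantive difference lies in how you close the loop from the Lyapunov function. The paper proves $(3)\Rightarrow(1)$ directly by testing only two-point sets $A=\{x,y\}$: assuming $V(A)<0$ (without loss of generality, by time-reversal symmetry), it takes a single accumulation point $B$ of the forward orbit and observes $V(B)=V(f'(B))$, contradicting strict descent. Restricting to two-point sets quietly avoids the boundary issue you flagged, since a Hausdorff limit of pairs with diameter at most $\delta$ is again such a pair and hence lies in $U$ by the choice of $\delta$. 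Your route instead proves $(3)\Rightarrow(2)$ for arbitrary $A\in\K(X)$, which forces you to introduce the cushion $\clos(W)\subset U$ and to run the limit argument on both the $\omega$- and $\alpha$-limit sets to pin $L_+=L_-=0$. This is slightly longer but more robust: it makes explicit the compact-containment step that the paper's two-point shortcut renders unnecessary, and it yields the isolation of $\F_1$ directly rather than via expansiveness.
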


\begin{proof}
($1\to 2$).
Let $\delta$ be an expansive constant and 
define $$U=\{A\in\K(X):\diam(A)<\delta\}.$$ 
It is easy to see that $U$ is an isolating neighborhood of $\F_1$.

($2\to 3$). It follows by Theorem \ref{teoLyapDisc}.

($3\to 1$). Take $\delta>0$ such that if 
$\dist(x,y)\leq\delta$ then $\{x,y\}\in U$. 
Let us prove that $\delta$ is an expansive constant for $f$.
Assume by contradiction that $\dist(f^n(x),f^n(y))\leq\delta$ for all $n\in\Z$ 
and $x\neq y$. 
Define $A=\{x,y\}$. 
We have that $V(f'^n(A))$ is a decreasing sequence. 
Without loss of generality assume that $V(A)<0$.
Suppose that $f'^n(A)$ accumulates in $B$. 
Now it is easy to see that $B\in U\setminus \F_1$ and also $V(B)=V(f'(B))$. 
This contradiction proves the theorem.
\end{proof}

Recall that a \emph{continuum} is a compact connected set.
Denote by $\KC(X)=\{C\in\K(X):C\hbox{ is connected} \}$ the space of continua of $X$.

\begin{df}
\label{dfCwExp}
A homeomorphism $f\colon X\to X$ is \emph{continuum-wise expansive} if there is 
$\delta>0$ such that if $C\in\KC(X)$ and 
$\diam(f^n(C))\leq\delta$ for all $n\in \Z$ then $C\in \F_1$. 
\end{df}

A \emph{Lyapunov function} for a continuum-wise expansive homeomorphism is a continuous function $V\colon U\subset \KC(X)\to \R$ defined on a neighborhood 
of $\F_1(X)$ in $\KC(X)$ such that 
$V(\{x\})=0$ for all $x\in X$ and 
$V(f(C))<V(C)$ if $C\notin \F_1$ and $C,f(C)\in U$.

\begin{coro}
For a homeomorphism $f\colon X\to X$ the following statements are equivalent:
\begin{enumerate}
 \item $f$ is a continuum-wise expansive homeomorphism,
 \item $\F_1$ is an isolated set for $f'\colon\KC(X)\to\KC(X)$, 
 \item there is a continuous function $V\colon U\subset \KC(X)\to\R$ defined on an open set $U\subset\KC(X)$ containing $\F_1$
 such that $V(A)=0$ if and only if $A\in \F_1$ and $V(A)>V(f'(A))$ if $A,f'(A)\in U\setminus \F_1$.
\end{enumerate}
\end{coro}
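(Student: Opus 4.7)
The plan is to mirror the three-step cycle of Corollary \ref{LyapExp}, replacing $\K(X)$ by $\KC(X)$. The preliminary observation is that $\KC(X)$ is a closed (hence compact) subspace of $\K(X)$ in the Hausdorff topology, and $f'$ preserves $\KC(X)$ since the continuous image of a continuum is a continuum. Thus $f'$ restricts to a homeomorphism of the compact metric space $\KC(X)$ with $\F_1\subset\KC(X)$ invariant, putting us exactly in a situation where Theorem \ref{teoLyapDisc} is applicable.

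For $(1)\Rightarrow(2)$, let $\delta>0$ be a cw-expansive constant from Definition \ref{dfCwExp} and take $U=\{C\in\KC(X):\diam(C)<\delta\}$; this is open in $\KC(X)$ because the diameter is continuous on $\K(X)$, it contains $\F_1$, and Definition \ref{dfCwExp} is precisely the statement that $U$ is an isolating neighborhood of $\F_1$ for $f'$. For $(2)\Rightarrow(3)$, apply Theorem \ref{teoLyapDisc} to the homeomorphism $f'\colon\KC(X)\to\KC(X)$ with isolated set $\F_1$.

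For $(3)\Rightarrow(1)$ I would first extract $\delta>0$ with the property that every $C\in\KC(X)$ satisfying $\diam(C)\le\delta$ lies in $U$: a hypothetical sequence of counterexamples in $\KC(X)\setminus U$ with $\diam\to 0$ would Hausdorff-converge along a subsequence to a singleton of $\F_1\subset U$, contradicting the openness of $U$. Assuming for contradiction that some $C\in\KC(X)\setminus\F_1$ has $\diam(f^n(C))\le\delta$ for every $n\in\Z$, the full $\Z$-orbit $\{f'^n(C)\}$ lies in $U\setminus\F_1$ (since $f'$ preserves $\F_1$), and by the Lyapunov inequality $n\mapsto V(f'^n(C))$ is strictly decreasing, with limits $L_+<V(C)<L_-$ as $n\to\pm\infty$. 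Take forward and backward Hausdorff-accumulation points $B_+,B_-\in\KC(X)$; their diameters are at most $\delta$, so by the choice of $\delta$ they lie in $U$, and by continuity $V(B_\pm)=L_\pm$ together with $V(f'(B_\pm))=L_\pm$ (shifting indices by $1$). The Lyapunov inequality then forces $B_\pm\in\F_1$, so $L_+=L_-=0$, contradicting $L_+<L_-$.

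The hard part will be the last implication: one has to control accumulation points of the $\Z$-orbit of $C$ carefully and place them inside the open set $U$ rather than merely in $\clos(U)$, so that $V$ is actually defined there, and then use both the forward and the backward asymptotics simultaneously to rule out singleton accumulation. In the expansive setting of Corollary \ref{LyapExp} this is dispatched by the shortcut ``without loss of generality $V(A)<0$''; in the continuum-wise setting a nondegenerate continuum can in principle have a subsequence of iterates whose diameter tends to zero, so one cannot exclude singleton accumulation on the forward orbit from the diameter bound alone, and the two-sided argument above seems to be the cleanest way to close that gap.
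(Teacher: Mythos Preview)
Your proof is correct and follows exactly the route the paper intends (its own proof reads simply ``similar to the proof of Corollary~\ref{LyapExp}''): the same three-step cycle with $\KC(X)$ in place of $\K(X)$ and Theorem~\ref{teoLyapDisc} supplying $(2)\Rightarrow(3)$.

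One remark on your closing paragraph: the shortcut ``without loss of generality $V(A)<0$'' from Corollary~\ref{LyapExp} does transfer verbatim to the continuum-wise setting, so the two-sided argument, while perfectly valid, is not forced on you. The point is that singleton accumulation of the forward orbit is ruled out not by a diameter bound but by the value of $V$: once $V(C)<0$, monotonicity gives $V(B_+)\le V(C)<0$, and since $V$ vanishes exactly on $\F_1$ this already yields $B_+\notin\F_1$. (If instead $V(C)>0$ one runs the same argument on the backward orbit.) So the phenomenon you point to---iterates of a nondegenerate continuum whose diameters tend to zero---is real, but it does not create a gap in the one-sided argument.
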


\begin{proof}
The proof is similar to the proof of Corollary \ref{LyapExp}.
\end{proof}

\begin{bibdiv}
\begin{biblist}

\bib{BaSz}{book}{
author={N. P. Bhatia},
author={G. P. Szeg\"o},
title={Dynamical Systems: Stability Theory and Applications},
publisher={Springer-Verlag},
series={Lect. Not. in Math.},
volume={35},
year={1967}}

\bib{Conley78}{book}{
author={C. Conley},
title={Isolated invariant sets and the Morse index},
year={1978},
publisher={AMS}}

\bib{Hur}{article}{
author={M. Hurley},
title={Lyapunov Functions and Attractors in Arbitrary Metric Spaces},
year={1998},
journal={Proc. of the Am. Math. Soc.},
volume={126},
pages={245--256}}

\bib{Kato}{article}{
author={H. Kato},
title={Continuum-wise expansive homeomorphisms},
journal={Can. J. Math.},
volume={45},
number={3},
year={1993},
pages={576--598}}

\bib{Lew}{article}{
author={J. Lewowicz},
year={1980},
title={Lyapunov Functions and Topological Stability},
journal={J. Diff. Eq.},
volume={38},
pages={192--209}}

\bib{Massera49}{article}{
author={J. L. Massera},
title={On Liapunoff's Conditions of Stability},
journal={Ann. of Math.},
volume={50},
number={3},
pages={705--721},
year={1949}}

\bib{Nadler}{book}{
author={S. Nadler Jr.},
title={Hyperspaces of Sets},
publisher={Marcel Dekker Inc. New York and Basel},
year={1978}}

\bib{Whitney33}{article}{
author={H. Whitney},
title={Regular families of curves},
journal={Ann. of Math.}, 
number={34},
year={1933}, 
pages={244--270}}

\end{biblist}
\end{bibdiv}

\end{document}